\newtheorem{theorem}{Theorem}[section]
\newtheorem{lemma}[theorem]{Lemma}
\theoremstyle{definition}
\newtheorem{definition}[theorem]{Definition}
\newtheorem{proposition}[theorem]{Proposition}
\theoremstyle{remark}
\newtheorem{remark}[theorem]{Remark}
\numberwithin{equation}{section}
\newcommand{\Rn}{\mathbb{R}^{n}}
\newcommand{\Cn}{\mathbb{C}^{n}}
\newcommand{\blankbox}[2]{  \parbox{\columnwidth}{\centering
    \setlength{\fboxsep}{0pt}  }}
\DeclareMathOperator*{\limind}{lim\,ind}
\begin{document}
\title{On Polya's Theorem in Several Complex Variables}
\author{OZAN G\"UNY\"UZ}
\address{Sabanc{\i} University, 34956
Tuzla/Istanbul, Turkey}
\email{ozangunyuz@sabanciuniv.edu}
\author{VYACHESLAV ZAKHARYUTA}
\address{Sabanc{\i} University, 34956
Tuzla/Istanbul, Turkey}
\email{zaha@sabanciuniv.edu}

\begin{abstract}
 Let $K$ be a compact set in $\mathbb{C}$, $f$ a function analytic
in $\overline{\mathbb{C}}\smallsetminus K$ vanishing at $\infty $. Let $%
f\left( z\right) =\sum_{k=0}^{\infty }a_{k}\ z^{-k-1}$ be its Taylor
expansion at $\infty $, and $H_{s}\left( f\right) =\det \left(
a_{k+l}\right) _{k,l=0}^{s}$ the sequence of Hankel determinants. The classical
Polya inequality says that
\[
\limsup\limits_{s\rightarrow \infty }\left\vert H_{s}\left( f\right)
\right\vert ^{1/s^{2}}\leq d\left( K\right) ,
\]%
where $d\left( K\right) $ is the transfinite diameter of $K$. Goluzin has
shown that for some class of compacta this inequality is sharp. We provide
here a sharpness result for the multivariate analog of Polya's
inequality, considered by the second author in Math. USSR Sbornik,
25 (1975), 350-364.
\end{abstract}

\maketitle







\section{Preliminaries and Introduction }

We denote by \thinspace $A(\mathbb{C}^{n})^{*}$ the dual space to the
space $A(\mathbb{C}^{n})$ of all entire functions on \thinspace $\mathbb{C}%
^{n}$, \thinspace\ equipped with the locally convex topology of locally
uniform convergence in\thinspace $\mathbb{C}^{n}$. Following H\"{o}%
rmander(\cite{H}, Section 4.5), we call the elements of \thinspace $A(%
\mathbb{C}^{n})^{*}$\thinspace\ \textit{analytic functionals}.

Let \thinspace $\mathbb{Z}_{+\text{ }}^{n}$\thinspace\ be the collection of
all $n$-dimensional vectors with non-negative integer coordinates. For
\thinspace $k=\left( k_{1},\ldots ,k_{\nu },\ldots ,k_{n}\right) \in \mathbb{%
Z}_{+\text{ }}^{n}$ and $z=(z_{1},\ldots ,z_{n})\in \mathbb{C}^{n}$, let $%
z^{k}=z_{1}^{k_{1}}\ldots z_{n}^{k_{n}}$ and ${\lvert k\rvert }%
:=k_{1}+\ldots +k_{n}$ be the degree of the monomial $z^{k}$. We consider $\
$ the enumeration $\left\{ k\left( i\right) \right\} _{i\in \mathbb{N}}$ of
the set $\mathbb{Z}_{+\text{ }}^{n}$ such that ${\lvert k(i)\rvert }\leq {%
\lvert k(i+1)\rvert }$ and on each set $\left\{ \left\vert k\left( i\right)
\right\vert =s\right\} $ the enumeration coincides with the lexicographic
order relative to $k_{1},\ldots ,k_{n}$. We will write $s(i):={\lvert
k(i)\rvert }$. The number of multiindices of degree at most $s$ is
\thinspace $m_{s}:=C_{s+n}^{s}$\thinspace and the number of those of degree
exactly $s$ is $N_{s}=m_{s}-m_{s-1}=C_{n+s-1}^{s}$,\thinspace \thinspace $%
s\geq 1$;\thinspace \thinspace $N_{0}=1$. Let \thinspace $%
l_{s}:=\sum_{q=0}^{s}qN_{q}$\thinspace \thinspace\ for $s=0,1,2,\ldots $.

Consider Vandermondians:
\begin{equation*}
V\left( \zeta _{1},\ldots ,\zeta _{i}\right) :=\det \left( e_{\alpha }\left(
\zeta _{\beta }\right) \right) _{\alpha ,\beta =1}^{i},\ i\in \mathbb{N}%
\text{,}
\end{equation*}%
where $e_{\alpha }\left( z\right) :=z^{k\left( \alpha \right) },\ \alpha \in
\mathbb{N}$ and $\left( \zeta _{\beta }\right) \in K^{i}$.

For a compact set $K\subset \mathbb{C}^{n}$, define "maximal Vandermondians":
\begin{equation*}
V_{i}:=\sup \left\{ \left\vert V\left( \zeta _{1},\ldots ,\zeta _{i}\right)
\right\vert :\left( \zeta _{j}\right) \in K^{i} \right\},\,\,\,i\in \mathbb{N} .
\end{equation*}%
Set $d_{s}\left( K\right) :=\left( V_{m_{s}}\right) ^{1/l_{s}}$. The
transfinite diameter of $K$ is the number:
\begin{equation}
d\left( K\right) :=\limsup\limits_{s\rightarrow \infty }d_{s}\left( K\right)
.  \label{dk}
\end{equation}%
In the one-dimensional case, this notion was introduced by Fekete \cite{F}
for $n=1$, and by Leja \cite{L} for $n\geq 2$. That, in fact, the usual
limit can be taken in (\ref{dk}) was proved in \cite{F} for $\ n=1$ and in
\cite{Za1} for $n\geq 2$.

The \textit{pluripotential\ Green\ function}$\emph{\ }$of a compact set $%
K\subset \mathbb{C}^{n}$ is defined as follows
\begin{equation*}
g_{K}(z)=\limsup\limits_{\zeta \rightarrow z}\sup \{u(\zeta):u|_{K}\leq 0,\ u\in
\mathcal{L}(\Cn)\},
\end{equation*}%
where \thinspace $\mathcal{L}(\mathbb{C}^{n})$ represents the Lelong class consisting of
all functions\thinspace\ $u\in {Psh}(\mathbb{C}^{n})$ such that $u(\zeta
)-\ln |\zeta |$ is bounded from above near infinity. Since $d(K)=d(\hat{K}),$ with no loss of generality, we are going to consider polynomially convex compact sets. We will also consider the class of functions
$\mathcal{L}^{+}(\mathbb{C}^{n}):=\{u\in \mathcal{L}(\mathbb{C}^{n}): u(z)\geq log^{+}\lvert z \rvert
+C\}$. The function $g_{K}(z)$
is either plurisubharmonic in $\mathbb{C}^{n}$ or identically equal to $%
+\infty $. For more detail about the
pluripotential Green function, we refer the reader to \cite{Kl}, \cite{Sa} and \cite{Za2}.

The \emph{Monge-Ampere energy $\mathcal{E}(u, v)$ of u relative to v} for $u, v\in
\mathcal{L}^{+}(\mathbb{C}^{n})$ is defined as follows (\cite{BB}, Section 5): \begin{equation*}
\mathcal{E}(u,v):= \int_{\mathbb{C}^{n}}{(u-v)\sum_{j=0}^{n}{(dd^{c}u)^j\wedge(dd^{c}v)^{n-j}}}.
\end{equation*}

   Let $K$ be a compact set in $\mathbb{C}^{n}$. $A(K)$ represents the
locally convex space of all germs of analytic functions on $K$, equipped
with the countable inductive limit topology, i.e., $$A(K)=\limind_{j\rightarrow \infty}{A(D_{j})}$$
\thinspace considered in regard to the inclusion of sets. \thinspace $D_{j}$\thinspace\ are open sets
such that $D_{j+1}\Subset D_{j}$ for each $j\in \mathbb{N}$ \thinspace\ and \thinspace $%
K=\bigcap_{j=1}^{\infty }{D_{j}}$. Thus, in this setting, a sequence $\{u_{j}\}$ of germs
converges to a germ $u$ in this topology in case there exists an open
neighbourhood $V\supset K$ and functions $g_{j},\,\,g\in A(V)$ being the
representatives of the germs $u_{j},\,\,u$ respectively, such that $g_{j}$
converges uniformly to $g$ on any compact subset of $V$.

 The Polya Theorem (Theorem \ref{polya1}) and its multivariate analog (Theorem \ref{polya2}), considered by the second
author in \cite{Za1}, are discussed in Section 2. The sharpness result of the
generalized Polya inequality (section 4) is based on the comparison of the
expression (\ref{hank}) for Hankel-like determinants from \cite{Za1} with the expression
(\ref{zskm}) for the transfinite diameter from Bloom and Levenberg \cite{BL}. The main
result of this article (Theorem \ref{strongerr}) says that, for \textit{real} compact sets,
the equality \textit{attains} in the generalized Polya inequality (\ref{polyainn}) for
some analytic functional $f^{*}\in A(K)^{*}$. This
result seems to be new even in the one-dimensional case. Additionally, we introduce two sharpness properties for compact sets $K\subseteq \Cn$ and study the stability of these properties relative to the approximations from inside and outside (Proposition \ref{wsp} and \ref{sharpstablein}).

\section{Polya's Theorem}

The following result is due to Polya \cite{Po}.

\begin{theorem}\label{polya1}
Let $K$ be a polynomially convex compact set in $\mathbb{C}$ and $f$ $\in
A\left( \overline{\mathbb{C}}\smallsetminus {K}\right) $ have the following
expansion in a neighbourhood of $\infty $:
\begin{equation}
f(z)=\sum_{k=0}^{\infty }{\frac{a_{k}}{z^{k+1}}.}  \label{texp}
\end{equation}%
Let $A_{s}\left( f\right) :=\det (a_{k+m})_{k,m=0}^{s-1}$, $s\in \mathbb{N}$
, be a sequence of Hankel determinants composed from the coefficients of the
expansion (\ref{texp}). Then,
\begin{equation}
D\left( f\right) :=\limsup_{s\rightarrow \infty }{\lvert A_{s}}\left(
f\right) {\rvert }^{1/s^{2}}\leq d(K).  \label{polineq}
\end{equation}
\end{theorem}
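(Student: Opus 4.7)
My plan is to follow Polya's classical approach: reduce $A_{s}(f)$ to an $s$-fold contour integral against a Vandermondian-squared kernel via the Hankel moment identity, estimate the integrand by the maximal Vandermondian on the contour, and then shrink the contour down to $\partial K$.

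Because $f\in A(\overline{\mathbb{C}}\setminus K)$ vanishes at $\infty$, for any positively oriented Jordan curve $\Gamma\subset\overline{\mathbb{C}}\setminus K$ enclosing $K$, the coefficients in (\ref{texp}) are given by $a_{k}=\frac{1}{2\pi i}\oint_{\Gamma}z^{k}f(z)\,dz$. Substituting into $A_{s}(f)=\det(a_{k+m})_{k,m=0}^{s-1}$ and using multilinearity of the determinant in its rows (one integration variable $z_{k}$ per row) I would obtain
\begin{equation*}
A_{s}(f)=\frac{1}{(2\pi i)^{s}}\int_{\Gamma^{s}}\det\bigl(z_{k}^{k+m}\bigr)_{k,m=0}^{s-1}\prod_{k=0}^{s-1}f(z_{k})\,dz_{k},
\end{equation*}
and then factor $\det(z_{k}^{k+m})=\prod_{k}z_{k}^{k}\cdot V(z_{0},\ldots,z_{s-1})$ with $V$ the Vandermonde determinant. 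Because $\prod_{k}f(z_{k})\,dz_{k}$ is symmetric in $z_{0},\ldots,z_{s-1}$ while $V$ is alternating, averaging the non-invariant factor $\prod_{k}z_{k}^{k}\cdot V$ over the symmetric group $S_{s}$ collapses it to the symmetric expression $V(z_{0},\ldots,z_{s-1})^{2}/s!$, producing the classical Hankel moment representation
\begin{equation*}
A_{s}(f)=\frac{1}{(2\pi i)^{s}\,s!}\int_{\Gamma^{s}}V(z_{0},\ldots,z_{s-1})^{2}\prod_{k=0}^{s-1}f(z_{k})\,dz_{k}.
\end{equation*}

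From here a crude sup-bound suffices: replacing $|V|$ by the maximal Vandermondian $V_{s}(\Gamma)$ of the contour (in the notation of the Introduction, applied to the $s$ monomials $1,z,\ldots,z^{s-1}$), each $|f(z_{k})|$ by $\|f\|_{\Gamma}$, and the integration measure by $L(\Gamma)^{s}$, one gets
\begin{equation*}
|A_{s}(f)|\le\frac{V_{s}(\Gamma)^{2}\,\|f\|_{\Gamma}^{s}\,L(\Gamma)^{s}}{(2\pi)^{s}\,s!}.
\end{equation*}
Taking $|\cdot|^{1/s^{2}}$, every factor except $V_{s}(\Gamma)^{2/s^{2}}$ tends to $1$. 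Since in one variable $l_{s-1}=s(s-1)/2$, the defining convergence $V_{s}(\Gamma)^{1/l_{s-1}}\to d(\Gamma)$ (an honest limit by Fekete \cite{F}) yields $V_{s}(\Gamma)^{2/s^{2}}\to d(\Gamma)$, hence $\limsup_{s}|A_{s}(f)|^{1/s^{2}}\le d(\Gamma)$. To conclude, I would take $\Gamma=\partial D_{j}$ from the neighborhood basis $\{D_{j}\}_{j\in\mathbb{N}}$ that defines $A(K)$; polynomial convexity in one variable gives $d(\partial D_{j})=d(\overline{D_{j}})$, and the continuity of the transfinite diameter under the monotone shrink $\overline{D_{j}}\searrow K$ gives $d(\overline{D_{j}})\to d(K)$, completing the argument.

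\textbf{Main obstacle.} Neither step is conceptually difficult; what truly deserves care is (i) getting the sign and the combinatorial factor $1/s!$ right in the Hankel/symmetrization identity, and (ii) justifying $d(\overline{D_{j}})\to d(K)$, which rests on the existence of the honest limit (not merely $\limsup$) in the definition of the one-variable transfinite diameter, i.e., Fekete's classical convergence theorem.
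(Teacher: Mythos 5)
The paper does not actually prove Theorem \ref{polya1}: it is attributed to Polya \cite{Po}, and the multivariate version (Theorem \ref{polya2}) is referred to \cite{Za1}, the only ingredient reproduced being the identity (\ref{i!hi}). Your argument is a correct self-contained reconstruction along exactly those lines: your symmetrized representation $A_{s}(f)=\frac{1}{(2\pi i)^{s}s!}\int_{\Gamma^{s}}V^{2}\prod_{k}f(z_{k})\,dz_{k}$ is precisely the case $n=1$ of (\ref{i!hi}) for the functional $f^{*}(\varphi)=\frac{1}{2\pi i}\oint_{\Gamma}\varphi f\,dz$, the combinatorial factor $1/s!$ and the sign work out as you indicate, and the passage from $V_{s}(\Gamma)^{2/s^{2}}$ to $d(\Gamma)$ via $l_{s-1}=s(s-1)/2$ and Fekete's convergence theorem \cite{F} is sound. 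Two small points deserve explicit care. First, for a general polynomially convex $K\subset\mathbb{C}$ (e.g.\ two disjoint disks) a single Jordan curve enclosing $K$ has polynomial hull much larger than $K$, so $d(\widehat{\Gamma})$ would not tend to $d(K)$; you must take $\Gamma$ to be a finite union of rectifiable (say piecewise smooth) Jordan curves forming a cycle of winding number one about each point of $K$ and lying in a small neighbourhood of $K$ --- the Cauchy representation $a_{k}=\frac{1}{2\pi i}\oint_{\Gamma}z^{k}f(z)\,dz$ still holds for such cycles, and then $d(\widehat{\Gamma_{j}})\to d(K)$ follows from the one-variable case of Proposition \ref{stabilitydown}. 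Second, the normalizations $\lVert f\rVert_{\Gamma}^{s/s^{2}}\to 1$ and $d_{s-1}(\Gamma)^{(s-1)/s}\to d(\Gamma)$ implicitly use $\lVert f\rVert_{\Gamma}>0$ and $d(\Gamma)>0$; both degenerate cases ($f\equiv 0$, or $\Gamma$ polar) are trivial or vacuous here, but should be dismissed in a line. With these repairs the proof is complete and coincides in substance with the classical argument underlying the paper's framework.
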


\vspace{1mm}

A direct multivariate analog of the inequality (\ref{polineq}) makes no
sense, since there are functions analytic on the complement of $K$ but
constants only. Schiffer and Siciak\thinspace \thinspace (\cite{SS}) proved some
analog for the product of plane compact sets \thinspace\ $K=K_{1}\times
K_{2}\times \ldots \times K_{n}\subset \mathbb{C}^{n}$ and functions $f\in
A((\overline{\mathbb{C}}-K_{1})\times \ldots \times (\overline{\mathbb{C}}%
-K_{n}))$. Sheinov (\cite{Sh2}, \cite{Sh}) considered another analog of Polya's inequality
for a \textit{linearly convex} compact set $K$, considering the Taylor
expansion at the origin for functions analytic in the domain $D=K^{\ast }$
linearly convex adjoint (conjugate) to $K$ (projective complement of $K$ by
Martineau \cite{Ma}).

The case of an arbitrary compact set $K\subset \mathbb{C}^{n}$ was studied
in \cite{Za1}. It was suggested there, instead of analytic functions on some
artificial "complement" of $K$, to deal with those analytic functionals in $%
\mathbb{C}^{n}$ that are extendible continuously
onto the space $A(\widehat{K})$. We denote by $A_{0}(\{\infty^{n}\})$ the space of all analytic germs
$f'$ at $\infty^{n}=(\infty, \infty, \ldots, \infty)\in \overline{\mathbb{C}^n}$ \,with Taylor expansion
of the form \begin{equation} \label{expan}f'(z)=\sum_{k\in \mathbb{Z}^n}{\frac{a_{k}}{z^{k+I}}},\,\,\,\,I=(1, 1,...,
1),\end{equation}
converging in some neighborhood of $\infty ^{n}.$

\begin{lemma}\label{isomm} There is an isomorphism,
\begin{equation}
T:A\left( \mathbb{C}^{n}\right) ^{\ast }\rightarrow A_{0}\left( \left\{ \infty
^{n}\right\} \right),  \label{tiso}
\end{equation}
such that, for each $f^*$ and $f'=Tf^*$, we have
\begin{equation*}
f^{\ast }\left( \varphi\right) =\left\langle \varphi,f^{\prime }\right\rangle :=\left(
\frac{1}{2\pi i}\right) ^{n}\int_{\mathbb{T}_{R}^{n}}\varphi\left( \zeta \right)
f^{\prime }\left( \zeta \right) \ d\zeta ,\ \varphi\in A\left( \mathbb{C}%
^{n}\right),
\end{equation*}
where
\begin{equation}\label{torusn}
\mathbb{T}_{R}^{n}:=\left\{ z=\left( z_{\nu }\right) \in \mathbb{C}
^{n}:\left\vert z_{\nu }\right\vert =R,\ \nu =1,\ldots ,n\right\} ,\
R=R\left( f^{\ast }\right) \text{.}
\end{equation}
\end{lemma}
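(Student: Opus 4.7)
The plan is to define $T$ via moments: given $f^{\ast}\in A(\mathbb{C}^{n})^{\ast}$, set $a_{k}:=f^{\ast}(z^{k})$ for $k\in\mathbb{Z}_{+}^{n}$, and let $Tf^{\ast}:=f'$ be the formal series \eqref{expan} built from these coefficients. The whole lemma then reduces to (i) showing $f'$ really defines a germ in $A_{0}(\{\infty^{n}\})$, (ii) verifying the integral formula, and (iii) establishing that the map is a bijection with continuous inverse.

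For (i), use the continuity of $f^{\ast}$ on the Fréchet space $A(\mathbb{C}^{n})$: there exist $R,C>0$ with $|f^{\ast}(\varphi)|\le C\max_{|\zeta_{\nu}|\le R}|\varphi(\zeta)|$. Applied to $\varphi(z)=z^{k}$ this yields $|a_{k}|\le C R^{|k|}$, so the series \eqref{expan} converges normally on $\{|z_{\nu}|>R\}$ and defines a genuine element of $A_{0}(\{\infty^{n}\})$. For (ii), expand any $\varphi\in A(\mathbb{C}^{n})$ as $\varphi(z)=\sum_{k}b_{k}z^{k}$, a series converging normally on every compact subset of $\mathbb{C}^{n}$, in particular on the torus $\mathbb{T}_{R'}^{n}$ for $R'>R$. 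Multiplying by the normally convergent series for $f'$ on the same torus and integrating termwise, the standard orthogonality
\[
\left(\tfrac{1}{2\pi i}\right)^{n}\int_{\mathbb{T}_{R'}^{n}}\zeta^{k-m-I}\,d\zeta=\delta_{k,m}
\]
yields $\langle\varphi,f'\rangle=\sum_{k}a_{k}b_{k}$. On the other hand, the Taylor partial sums of $\varphi$ converge to $\varphi$ in $A(\mathbb{C}^{n})$, so by linearity and continuity of $f^{\ast}$ we also obtain $f^{\ast}(\varphi)=\sum_{k}b_{k}a_{k}$, proving the identity.

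For (iii), injectivity is immediate: if $Tf^{\ast}=0$ then $f^{\ast}(z^{k})=0$ for all $k$, and polynomials are dense in $A(\mathbb{C}^{n})$, forcing $f^{\ast}=0$. For surjectivity, given any $f'\in A_{0}(\{\infty^{n}\})$, pick $R$ so that \eqref{expan} converges normally on $\mathbb{T}_{R'}^{n}$ for $R'>R$ and \emph{define} $f^{\ast}(\varphi)$ by the torus integral; the estimate $|f^{\ast}(\varphi)|\le C'\max_{\mathbb{T}_{R'}^{n}}|\varphi|$ makes $f^{\ast}$ a continuous functional on $A(\mathbb{C}^{n})$, and step (ii) shows that $Tf^{\ast}=f'$. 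The same estimate is the core of the continuity of $T^{-1}$ into the strong dual topology; bicontinuity of $T$ itself follows from the coefficient bound $|a_{k}|\le CR^{|k|}$ obtained in step (i), which matches the norms used on the successive Banach steps of the inductive limit defining $A_{0}(\{\infty^{n}\})$. The step most prone to technical slips is the termwise integration in (ii): one must juggle two different radii — the $R$ controlling the growth of $(a_{k})$ (equivalently, the convergence region of $f'$) and a slightly larger $R'$ on which the product with $\varphi$ is uniformly summable — so that matching radii make both the Cauchy–Hadamard estimate on $(b_{k})$ and the normal convergence of $f'$ hold simultaneously on the same torus $\mathbb{T}_{R'}^{n}$.
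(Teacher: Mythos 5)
Your argument is correct, and it is essentially the standard Köthe--Grothendieck--Silva duality argument in the polydisc/torus setting: coefficients $a_{k}=f^{\ast}(z^{k})$, the bound $|a_{k}|\le CR^{|k|}$ from continuity on the Fréchet space $A(\mathbb{C}^{n})$, termwise integration against the Taylor expansion using the torus orthogonality relations, density of polynomials for injectivity, and the integral representation for surjectivity. The only difference from the paper is that the paper does not prove the lemma at all --- it simply cites Dineen, \emph{Complex Analysis in Locally Convex Spaces}, Chapter 3, where this duality is established by essentially the same computation you give. So you have supplied a self-contained proof of a statement the authors outsource; your handling of the two radii $R<R'$ in the termwise integration, and your remark that the topological part of the claim reduces to matching the coefficient bounds with the Banach steps of the inductive limit defining $A_{0}(\{\infty^{n}\})$, are exactly the points that need care, and you treat them correctly. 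The one item you pass over lightly is that $A_{0}(\{\infty^{n}\})$ consists of \emph{germs}, so one should note that the image $Tf^{\ast}$ is independent of the admissible radius chosen; this is immediate from your construction but worth a sentence.
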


\begin{proof} See, e.g., \cite{Sd}, Chapter 3. \end{proof}

Let us define, for every analytic functional $f^{\ast }$, a related sequence
of multivariate Hankel-like determinants constructed from the coefficients
of the expansion (\ref{expan}):
\begin{equation}
H_{i}=H_{i}\left( f^{\ast }\right) :=\det \left( a_{k\left( \alpha \right)
+k(\beta) }\right) _{\alpha ,\beta =1}^{i},\ i\in \mathbb{N}  \label{hank}
\end{equation}%
with $a_{k\left( \alpha \right) }:=f^{\ast }\left( e_{\alpha }\right)
=\left\langle e_{\alpha },f^{\prime }\right\rangle ,\ \alpha \in \mathbb{N},\,\,f'=Tf^*$%
. Now we are ready to formulate the general form of multivariate Polya's
inequality.

\begin{theorem}\label{polya2}
Suppose \thinspace that $K$\thinspace\ is a compact set in \thinspace $%
\mathbb{C}^{n}$, \thinspace $f^{\ast }$ is an analytic functional which has
a continuous extension onto $A\left( K\right)$ and $f'=Tf^{*}$ is the corresponding analytic germ at
$\infty^{n}$. Then for the determinants (%
\ref{hank}), the inequality holds:
\begin{equation}\label{polyainn}
D(f'):= \limsup_{i\rightarrow \infty }{{\lvert H_{i}}}\left( f^{\ast }\right) {{%
\rvert }^{\frac{1}{2l_{s(i)}}}}\leq d(K).
\end{equation}
\end{theorem}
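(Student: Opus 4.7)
My plan is to express $H_i$ as a tensor-functional action on the square of the Vandermondian and then combine the continuity of $f^{\ast}$ on $A(K)$ with the known asymptotics of the transfinite diameter. Writing $a_{k(\alpha)+k(\beta)}=f^{\ast}(e_\alpha\,e_\beta)$ and expanding the determinant by Leibniz, $H_i=\sum_{\sigma}\mathrm{sgn}(\sigma)\prod_\alpha f^{\ast}(e_\alpha e_{\sigma(\alpha)})$. Introducing an independent copy $\zeta_\alpha\in\Cn$ of the variable for each row, I interpret the product of functionals as a tensorized functional $(f^{\ast})^{\otimes i}$ acting on germs of analytic functions of $i$ points in $\Cn$, giving
\begin{equation*}
H_i=(f^{\ast})^{\otimes i}\Bigl[\,\textstyle\prod_\alpha \zeta_\alpha^{k(\alpha)}\cdot V(\zeta_1,\ldots,\zeta_i)\Bigr].
\end{equation*}
Because $(f^{\ast})^{\otimes i}$ is invariant under permutations of $\zeta_1,\ldots,\zeta_i$, I may replace the integrand by its symmetric part; pairing the antisymmetric Vandermondian with $\prod_\alpha\zeta_\alpha^{k(\alpha)}$ and symmetrizing collapses the product into $V/i!$, yielding the identity $H_i = \tfrac{1}{i!}\,(f^{\ast})^{\otimes i}\bigl[V(\zeta_1,\ldots,\zeta_i)^2\bigr]$.

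Continuity of $f^{\ast}$ on the DFS inductive-limit space $A(K)=\limind_j A(D_j)$ provides, for each compact neighborhood $L$ of $K$, a constant $C_L$ with $|f^{\ast}(\varphi)|\le C_L\sup_L|\varphi|$; the standard tensorization in the DFS setting then gives $|(f^{\ast})^{\otimes i}(\Psi)|\le C_L^i\sup_{L^i}|\Psi|$. Applied to $\Psi=V^2$ this yields
\begin{equation*}
|H_i|\le \frac{C_L^i}{i!}\,V_i(L)^2,\qquad V_i(L):=\sup_{L^i}|V|,
\end{equation*}
and extracting $(2l_{s(i)})$-th roots separates a prefactor $(C_L^i/i!)^{1/(2l_{s(i)})}$ which tends to $1$, since $l_{s(i)}$ is of order $s^{n+1}$ while both $i$ and $\log i!$ grow no faster than $s^n\log s$.

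The remaining task is the asymptotic $\limsup_i V_i(L)^{1/l_{s(i)}}\le d(L)$. Along the subsequence $i=m_s$ this is exactly $d_s(L)\to d(L)$ from the definition of the transfinite diameter. For intermediate indices $m_{s-1}<i<m_s$, $V$ is homogeneous of total degree $\ell_i:=\sum_{\alpha\le i}|k(\alpha)|$ in the points, $\ell_i/l_{s(i)}=1+O(1/s)$, and the known convergence $V_i(L)^{1/\ell_i}\to d(L)$ therefore transfers to $V_i(L)^{1/l_{s(i)}}\to d(L)$. Combining these estimates gives $\limsup_i |H_i|^{1/(2l_{s(i)})}\le d(L)$ for every compact neighborhood $L$ of $K$; letting $L$ shrink to $K$ and using the outer continuity $d(L)\to d(K)$ of the transfinite diameter on polynomially convex compacta finishes the proof. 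The main obstacle I expect is the uniform control of the intermediate indices $i\ne m_s$: one must ensure that the convergence of the normalized Vandermondian is sharp enough, uniformly in $s$, to be compatible with rescaling the exponent from $\ell_i$ to $l_{s(i)}$.
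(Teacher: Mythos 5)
Your proposal is correct and takes essentially the same route as the paper, which defers the proof to Theorem 3 of \cite{Za1} but explicitly highlights your key identity $i!\,\lvert H_i(f^{\ast})\rvert = \lvert (f^{\ast})^{\otimes i}[V^2]\rvert$ as equation (\ref{i!hi}); the remaining steps --- the tensorized continuity bound over a compact neighborhood $L$, the Vandermonde asymptotics, and passing from $d(L)$ to $d(K)$ via the stability result of Proposition \ref{stabilitydown} (the paper notes the original argument yields $\widehat{d}(K)$ first) --- match the cited proof. The intermediate-index control you flag at the end is precisely the technical point settled in \cite{Za1} through Chebyshev-constant estimates relating $V_i$ to $V_{i-1}$.
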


It has been proved in \cite{Za1} a bit weaker result with the outer
transfinite diameter $\widehat{d}\left( K\right) $ instead of $d(K)$, but
later it was proved that $\widehat{d}\left( K\right) =d(K)$\,(see Proposition \ref{stabilitydown} below).

We send the reader for the proof of Theorem \ref{polya2} to \cite{Za1}, Theorem 3. However we cite here the
following equality, which is crucial there and will be essentially used in Section 4:
\begin{equation}
i!\,{\lvert \ H\,}_{i}\left( f^{\ast }\right) {\rvert }={\lvert \ f_{\zeta
^{(i)}}^{\ast }(\ldots f_{\zeta ^{(j)}}^{\ast }(\ldots (f_{\zeta
^{(1)}}^{\ast }({[V(\zeta ^{(1)},\zeta ^{(2)},\ldots ,\zeta ^{(i)})]}%
^{2})\ldots )\ldots )\rvert ,}  \label{i!hi}
\end{equation}%
$i\in \mathbb{N}$, here the notation \thinspace $f_{\zeta ^{(j)}}^{\ast }$%
\thinspace\ means that the functional $f^{\ast }$ is applied sequentially to
a function of the variable $\zeta ^{(j)}$ by keeping the other variables
fixed.
\begin{remark}
The classical Polya's Theorem (Theorem \ref{polya1}) is a particular case of Theorem \ref{polya2} since, due to
Gr\"{o}thendieck-K\"{o}the-Silva duality\,(see \cite{Gr}, \cite{Kot}, \cite{Sil}), every $f\in
A(\overline{\mathbb{C}}\backslash K)$ satisfying (\ref{texp}) in a neighborhood of $\infty$ represents a linear
continuous functional $f^{*}\in A(K)^{*}\hookrightarrow A(\mathbb{C})^{*}$. Hereafter $\hookrightarrow$ denotes
a linear continuous embedding.
\end{remark}

Let \thinspace $K\subset \mathbb{C}^{n}$\thinspace\ be a  compact set, and \thinspace $\mu $\thinspace\ be a bounded positive Borel measure on \thinspace $K$. The pair \thinspace $(K,\mu )$ \thinspace\ is
said to satisfy \textit{Bernstein-Markov inequality} for holomorphic
polynomials in \thinspace $\mathbb{C}^{n}$\thinspace\ if, given \thinspace $%
\varepsilon >0$\thinspace ,\thinspace\ there exists a constant \thinspace $%
M=M(\varepsilon )$\thinspace\ such that for all polynomials \thinspace $%
p_{s} $\thinspace\ of degree at most \thinspace $s$\thinspace\
\begin{equation*}
{\lVert \ p_{s}\rVert }_{K}\leq M(1+\varepsilon )^{s}{\lVert \ p_{s}\rVert }%
_{L^{2}(\mu )}.
\end{equation*}

\begin{theorem}\label{BL25}
(Bloom-Levenberg, \cite{BL})\thinspace Let \thinspace $K\subset \mathbb{C}^{n}$%
\thinspace\ be a compact set,\thinspace \thinspace $\mu $%
\thinspace\ be a bounded positive Borel measure on \thinspace $K$\thinspace\ and
let  \thinspace $(K,\mu )$\thinspace\ satisfy Bernstein-Markov inequality.
Then,
\begin{equation*}
\lim_{s\rightarrow \infty }{Z_{s}(K,\mu )^{\frac{1}{2l_{s}(n)}}}=d(K),
\end{equation*}%
where
\begin{equation}
Z_{s}(K,\mu )=\int_{K^{m_{s}(n)}}{\lvert \ V(\zeta ^{(1)},\ldots ,\zeta ^{(m_{s}(n))})\rvert }^{2}d\mu (\zeta ^{(1)})d\mu\ldots d\mu (\zeta ^{(m_{s}(n))}).  \label{zskm}
\end{equation}
\end{theorem}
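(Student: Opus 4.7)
My plan is to sandwich $Z_s(K,\mu)^{1/(2l_s)}$ between two quantities both tending to $d(K)$, invoking the known input $d_s(K)=V_{m_s}^{1/l_s}\to d(K)$. The upper bound requires no appeal to Bernstein-Markov: pointwise on $K^{m_s}$, $|V(\zeta^{(1)},\ldots,\zeta^{(m_s)})|^{2}\leq V_{m_s}^{2}$, so $Z_s(K,\mu)\leq V_{m_s}^{2}\mu(K)^{m_s}$. Taking $1/(2l_s)$-th roots and using the combinatorial asymptotic $m_s/l_s\to 0$ (which follows from $l_s\sim \tfrac{n}{n+1}\,s\,m_s$) yields $\limsup_s Z_s^{1/(2l_s)}\leq d(K)$.

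\textbf{Lower bound via a product Bernstein-Markov inequality.} For each fixed $j$, $V(\zeta^{(1)},\ldots,\zeta^{(m_s)})$ is a holomorphic polynomial in $\zeta^{(j)}\in\mathbb{C}^n$ of degree at most $s$, since every $e_\alpha$ appearing in the determinant has degree $s(\alpha)\leq s$. My plan is to iterate the one-variable Bernstein-Markov inequality over all $m_s$ coordinate blocks to obtain
\[
V_{m_s}^{\,2}\;\leq\;\bigl(M(1+\varepsilon)^{s}\bigr)^{2m_s}\,Z_s(K,\mu).
\]
Concretely, for each $j$ and for each fixed choice of the remaining variables, Bernstein-Markov in $\zeta^{(j)}$ gives $\sup_{\zeta^{(j)}\in K}|V|^{2}\leq M^{2}(1+\varepsilon)^{2s}\int_K|V|^{2}\,d\mu(\zeta^{(j)})$; combining this with the trivial bound $\sup\int\leq\int\sup$, one peels off one variable at a time. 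Extracting $1/(2l_s)$-th roots then yields
\[
d_s(K)\;\leq\;M^{\,m_s/l_s}\,(1+\varepsilon)^{\,s m_s/l_s}\,Z_s^{1/(2l_s)}.
\]
Using $m_s/l_s\to 0$ and $s m_s/l_s\to (n+1)/n$, passing to the limit and then letting $\varepsilon\to 0^+$ gives $\liminf_s Z_s^{1/(2l_s)}\geq d(K)$, closing the sandwich.

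\textbf{Main obstacle.} The non-trivial step is the iterative upgrade of the single-variable Bernstein-Markov inequality to a statement on $(K^{m_s},\mu^{\otimes m_s})$: one must apply the one-variable bound uniformly in the remaining $m_s-1$ parameter variables, swap suprema and integrals in the correct direction, and check that Fubini applies. The natural inductive step is $\sup_{\zeta^{(k+1)},\ldots}\!\int_{K^k}|V|^{2}\,d\mu^{\otimes k}\;\leq\;M^{2}(1+\varepsilon)^{2s}\,\sup_{\zeta^{(k+2)},\ldots}\!\int_{K^{k+1}}|V|^{2}\,d\mu^{\otimes(k+1)}$, whose verification combines one call to Bernstein-Markov in $\zeta^{(k+1)}$ with $\sup\int\leq\int\sup$. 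Once this product version is in hand, the conclusion reduces to the purely combinatorial asymptotics $m_s/l_s\to 0$ and $s m_s/l_s\to (n+1)/n$, which ensure that the accumulated constants $(M(1+\varepsilon)^{s})^{2m_s}$, after taking $(2l_s)$-th roots, contribute only the harmless factor $(1+\varepsilon)^{(n+1)/n}$ that is killed in the limit $\varepsilon\to 0^+$.
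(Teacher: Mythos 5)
Your argument is correct. Note that the paper itself gives no proof of this statement --- it is quoted from Bloom--Levenberg \cite{BL} --- and your sandwich argument (the trivial pointwise bound $|V|^{2}\le V_{m_s}^{2}$ for the upper estimate, and the variable-by-variable iteration of the Bernstein--Markov inequality giving $V_{m_s}^{2}\le (M(1+\varepsilon)^{s})^{2m_s}Z_s$ for the lower estimate, followed by the asymptotics $m_s/l_s\to 0$ and $sm_s/l_s\to (n+1)/n$) is essentially the standard proof from that reference, so it is a faithful reconstruction rather than a new route.
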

\begin{remark}\label{existmu}In \cite{BL2} (Proposition 3.4 and Corollary 3.5), the same authors proved that for any compact set $K\subseteq \Cn$, there exists a measure $\mu\in \mathcal{M}(K)$ such that $(K, \mu)$ satisfies  Bernstein-Markov property. \end{remark}

\section{Stability of Transfinite Diameter}
The following proposition provides the stability of  transfinite diameter of a compact set in
$\mathbb{C}^n$ approximated from outside.
\begin{proposition}(V.A. Znamenskii \cite{Zn1,Zn2}, Levenberg \cite{Le1})\label{stabilitydown}
 Let $K$ be a compact set in $\mathbb{C}^n$ and  $\{K_{j}\}$ a sequence of compact sets such that
 $K_{j+1}\subseteq K_{j}$ for all $j\in \mathbb{N}$  and  $K=\bigcap_{j=1}^{\infty}{K_{j}}$. Then,
 $$\widehat{d}(K):=\lim_{j\rightarrow \infty}{d(K_{j})}=d(K).$$
\end{proposition}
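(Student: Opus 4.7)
The plan is to prove the statement in three stages. \emph{First (existence and easy direction).} Since the Vandermondian suprema $V_i$ are monotone in the underlying set, the hypothesis $K_{j+1}\subseteq K_j$ gives $d_s(K_{j+1})\leq d_s(K_j)$ for every $s$, hence $d(K_{j+1})\leq d(K_j)$; the sequence $\{d(K_j)\}$ is therefore decreasing and the limit $\widehat{d}(K):=\lim_j d(K_j)$ exists. The same monotonicity applied to $K\subseteq K_j$ yields $d(K)\leq d(K_j)$ for every $j$, whence $d(K)\leq \widehat{d}(K)$.

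\emph{Second (fixed-order convergence).} For each fixed $s\in\mathbb{N}$ I would establish $d_s(K_j)\to d_s(K)$ as $j\to\infty$ by a compactness argument. Choose configurations $\zeta^{(j)}=(\zeta_1^{(j)},\ldots,\zeta_{m_s}^{(j)})\in K_j^{m_s}$ attaining $V_{m_s}(K_j)$. All lie in the compact set $K_1^{m_s}$, so a subsequence converges to some $\zeta^{\star}\in K_1^{m_s}$. Because the $K_j$ are closed and nested with intersection $K$, each coordinate of $\zeta^{\star}$ belongs to every $K_{j_0}$ and hence to $K$, i.e.\ $\zeta^{\star}\in K^{m_s}$. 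Continuity of $V$ yields $|V(\zeta^{(j_k)})|\to |V(\zeta^{\star})|\leq V_{m_s}(K)$, and combining with the a priori bound $V_{m_s}(K_j)\geq V_{m_s}(K)$ forces $V_{m_s}(K_j)\to V_{m_s}(K)$, hence $d_s(K_j)\to d_s(K)$.

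\emph{Third (interchange of limits, the main obstacle).} One now has to promote fixed-order convergence to convergence of the full transfinite diameter. The naive route — pick $s_0$ with $d_{s_0}(K)<d(K)+\varepsilon$ and then $j_0$ with $d_{s_0}(K_j)<d(K)+2\varepsilon$ for $j\geq j_0$ — only controls the finite-order diameter $d_{s_0}(K_j)$, not $d(K_j)$ itself, because $d_s(\cdot)$ is not known a priori to be monotone in $s$. To bridge this I would replace $d_s$ by a \emph{monotonically decreasing} family of finite-order upper bounds for $d(\cdot)$, for instance the Chebyshev-type quantities $T_s(L)$ built from the directional Chebyshev constants in \cite{Za1}: they are monotone in the set $L$, and $T_s(L)\downarrow d(L)$ by submultiplicativity and Fekete's lemma. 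The compactness argument of Step~2, transcribed to these quantities, yields $T_{s_0}(K_j)\to T_{s_0}(K)$; combined with $d(K_j)\leq T_{s_0}(K_j)$ and a choice of $s_0$ with $T_{s_0}(K)<d(K)+\varepsilon$, this delivers $\widehat{d}(K)\leq d(K)$ and completes the proof. The hard point is precisely having such a monotone family of finite-order dominants for $d$; this is where the Znamenskii and Levenberg arguments concentrate their technical work.
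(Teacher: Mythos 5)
First, a point of comparison: the paper does not prove this proposition at all --- it is quoted with attribution to Znamenskii and Levenberg, so there is no in-paper argument to measure your proposal against. Judged on its own terms, your Steps 1 and 2 are correct and complete: monotonicity of the maximal Vandermondians $V_i$ in the underlying set gives the existence of the decreasing limit and the inequality $d(K)\leq\widehat{d}(K)$, and the compactness argument for fixed $s$ (a limit point of $\zeta^{(j_k)}\in K_{j_k}^{m_s}$ lies in $\bigcap_{j}K_{j}^{m_s}=K^{m_s}$) correctly yields $V_{m_s}(K_j)\downarrow V_{m_s}(K)$ and hence $d_s(K_j)\to d_s(K)$ for each fixed $s$.

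The genuine gap is in Step 3, and although you have located it accurately, you have not closed it. What is needed is a family $T_s$ of set-monotone functionals computable from finite-order data (so that an outer-continuity argument like that of Step 2 applies), with $d(L)\leq T_s(L)$ for every $s$ and $\inf_s T_s(K)=d(K)$. In one variable this is available for free: Fekete proved that $d_s(L)$ itself is nonincreasing in $s$, so $T_s=d_s$ works and Steps 1--2 already finish the proof. In $\mathbb{C}^n$ with $n\geq 2$, no such monotonicity of $d_s$ in $s$ is known, and the assertion that the Chebyshev-type quantities of \cite{Za1} satisfy $T_s(L)\downarrow d(L)$ ``by submultiplicativity and Fekete's lemma'' is not a one-line consequence: in Zakharyuta's framework $d(K)$ is recovered as the exponential of an integral over the simplex of \emph{directional} Chebyshev constants $\tau(K,\theta)$, each of which is a limit along multi-indices with $k/\lvert k\rvert\to\theta$; submultiplicativity identifies $\tau(K,\theta)$ with an infimum only along arithmetic subsequences in a fixed rational direction, and to dominate $d(K_j)$ by finite-order data one must control the passage from finitely many orders and directions to the integral uniformly in $j$. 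That uniformity is precisely the technical content of the Znamenskii--Levenberg proofs, which your outline delegates rather than supplies. As written, the argument establishes $d(K)\leq\widehat{d}(K)$ and the fixed-order convergence, but not the reverse inequality $\widehat{d}(K)\leq d(K)$.
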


In this section, we prove a stability property of transfinite diameter relative to the approximation
from inside. The following is an easy consequence of Lemma 6.5 in \cite{BT}:

\begin{lemma}\label{greendown}
Suppose that $K$ is a non-pluripolar compact set in  $\mathbb{C}^n$, and $\{K_{j}\}$  is a sequence of
non-pluripolar compact sets such that $K_{j}\subseteq K_{j+1}\subseteq K$, $j\in \mathbb{N}$ and for
$L:=\bigcup_{j=1}^{\infty}{K_{j}}$, we have \begin{equation}\int_{K\backslash L}{(dd^{c}g_{K})^{n}}=0.
\end{equation} Then $$\lim_{j\rightarrow \infty}{g_{K_{j}}(z)}=g_{K}(z),\,\,\,\,z\in \mathbb{C}^{n}.$$
\end{lemma}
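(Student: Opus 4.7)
The plan is to study the pointwise monotone limit $h(z):=\lim_{j\to\infty} g_{K_{j}}(z)$ and prove $h=g_{K}$ via a Bedford--Taylor comparison/domination argument in the Lelong class $\mathcal{L}^{+}(\mathbb{C}^{n})$, using the vanishing-mass hypothesis only as the key input to a routine case decomposition. Since $K_{j}\subseteq K_{j+1}\subseteq K$, any admissible candidate for $g_{K_{j+1}}$ is admissible for $g_{K_{j}}$, and any admissible candidate for $g_{K}$ is admissible for each $g_{K_{j}}$; hence $g_{K_{j+1}}\leq g_{K_{j}}$ and $g_{K_{j}}\geq g_{K}\geq 0$ pointwise. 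The decreasing limit $h$ exists, satisfies $g_{K}\leq h\leq g_{K_{1}}$, lies in $\mathcal{L}^{+}(\mathbb{C}^{n})$, and, being a decreasing limit of upper semicontinuous functions that is not identically $-\infty$, is itself upper semicontinuous and plurisubharmonic.

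For each $j$ the regularized extremal function $g_{K_{j}}$ vanishes on $K_{j}$ off the pluripolar set $P_{j}$ of its irregular points, so $h\leq g_{K_{j}}=0$ on $K_{j}\setminus P_{j}$; together with $h\geq g_{K}\geq 0$ this gives $h=0$ on $L$ off the pluripolar union $\bigcup_{j}P_{j}$. Analogously $g_{K}=0$ quasi-everywhere on $K$. Now apply the Bedford--Taylor comparison principle on $\mathcal{L}^{+}(\mathbb{C}^{n})$ with $u=g_{K}$ and $v=h$:
\[
\int_{\{g_{K}<h\}}(dd^{c}h)^{n}\;\leq\;\int_{\{g_{K}<h\}}(dd^{c}g_{K})^{n},
\]
and bound the right-hand side by splitting $\{g_{K}<h\}$ along its intersections with $\mathbb{C}^{n}\setminus K$, with $K\setminus L$, and with $L$. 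On the first piece $(dd^{c}g_{K})^{n}$ vanishes because $g_{K}$ is $\mathcal{L}$-maximal off $K$; on the second it vanishes by the standing hypothesis; and the third piece is pluripolar, since $g_{K}=h=0$ quasi-everywhere on $L$, while $(dd^{c}g_{K})^{n}$ places no mass on pluripolar sets. Hence the right-hand side is zero, so $(dd^{c}h)^{n}$ does not charge $\{g_{K}<h\}$; together with $h\geq g_{K}$, this says $(dd^{c}h)^{n}$ is concentrated on $\{h=g_{K}\}$.

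To finish, invoke the Bedford--Taylor global domination principle for $\mathcal{L}^{+}(\mathbb{C}^{n})$: if $u,v\in\mathcal{L}^{+}(\mathbb{C}^{n})$ and $u\leq v$ holds $(dd^{c}u)^{n}$-almost everywhere, then $u\leq v$ everywhere. Applied with $u=h$ and $v=g_{K}$, the hypothesis holds by the previous paragraph (in fact $h=g_{K}$ on $\mathrm{supp}\,(dd^{c}h)^{n}$), giving $h\leq g_{K}$; combined with $h\geq g_{K}$ this yields the desired pointwise equality $h=g_{K}$ on $\mathbb{C}^{n}$. The main obstacle I anticipate lies in the case analysis, particularly the pluripolarity of the third piece of the decomposition of $\{g_{K}<h\}$; this relies on the classical but nontrivial Bedford--Taylor property that Monge--Amp\`ere measures of locally bounded plurisubharmonic functions do not charge pluripolar sets, which is precisely the technical machinery underlying the cited Lemma 6.5 of \cite{BT}.
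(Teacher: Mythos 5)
The paper itself does not prove this lemma --- it simply derives it from Lemma 6.5 of Bedford--Taylor \cite{BT} --- so your attempt to supply a self-contained argument is reasonable, and most of it is sound: the monotonicity $g_{K_{j}}\downarrow h\geq g_{K}$, the membership $h\in\mathcal{L}^{+}(\mathbb{C}^{n})$, the fact that $h=g_{K}=0$ quasi-everywhere on $L$, and the comparison-principle estimate showing that $(dd^{c}h)^{n}$ puts no mass on $\{g_{K}<h\}$ are all correct and in the spirit of how such statements are proved. The genuine gap is in the last step. The domination principle you invoke --- ``if $u,v\in\mathcal{L}^{+}$ and $u\leq v$ holds $(dd^{c}u)^{n}$-a.e., then $u\leq v$ everywhere'' --- is false: for $n=1$ take $u=\log^{+}|z|$ and $v=\log^{+}(|z|/2)$; then $(dd^{c}u)^{n}$ is arclength measure on the unit circle, where $u=v=0$, yet $u>v$ on $\{|z|>1\}$. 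The correct statement tests the inequality against the Monge--Amp\`ere measure of the \emph{dominating} function: if $u\in\mathcal{L}$, $v\in\mathcal{L}^{+}$ and $u\leq v$ holds $(dd^{c}v)^{n}$-a.e., then $u\leq v$ on $\mathbb{C}^{n}$. Worse, the conclusion you actually derived --- $(dd^{c}h)^{n}$ concentrated on $\{h=g_{K}\}$ together with $h\geq g_{K}$ --- genuinely does not imply $h\leq g_{K}$: the same pair $h=\log^{+}|z|$, $g_{K}=\log^{+}(|z|/2)$ satisfies all of these conditions (both functions lie in $\mathcal{L}^{+}$, $h\geq g_{K}$, and $(dd^{c}h)^{n}$ lives on the unit circle where $h=g_{K}=0$), yet $h\neq g_{K}$. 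This is exactly the configuration that arises when the mass hypothesis of the lemma fails, so the step cannot be rescued as written.

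The repair is short and in fact makes your comparison-principle detour unnecessary. The hypothesis $\int_{K\backslash L}(dd^{c}g_{K})^{n}=0$, combined with the facts that $(dd^{c}g_{K})^{n}$ is supported in $K$ (after passing to $\hat{K}$, as the paper does) and charges no pluripolar set, shows that $(dd^{c}g_{K})^{n}$ is concentrated on the subset of $L$ where $g_{K}=h=0$ --- and you already proved $h=0$ quasi-everywhere on $L$. Hence $h\leq g_{K}$ holds $(dd^{c}g_{K})^{n}$-almost everywhere, and the correctly stated domination principle, applied with $v=g_{K}$ as the measure-bearing function, yields $h\leq g_{K}$ on all of $\mathbb{C}^{n}$; together with $h\geq g_{K}$ this gives the desired pointwise convergence.
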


\begin{theorem}\label{stabilityup}
Under the conditions of Lemma \ref{greendown}, we have,$$\lim_{j\rightarrow \infty} {d(K_{j})}= d(K).$$
\end{theorem}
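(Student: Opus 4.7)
The plan splits into two inequalities. For the upper bound, monotonicity of the Vandermondians under set inclusion is immediate: since $K_j\subseteq K_{j+1}\subseteq K$, one has $V_i(K_j)\leq V_i(K_{j+1})\leq V_i(K)$ for every $i\in \mathbb{N}$. Setting $i=m_s$, taking $1/l_s$-th roots, and passing to $\limsup_s$, we obtain $d(K_j)\leq d(K_{j+1})\leq d(K)$. The monotone limit therefore exists and satisfies $\lim_j d(K_j)\leq d(K)$.

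For the reverse inequality, I would combine Lemma \ref{greendown} with Theorem \ref{BL25}. By Remark \ref{existmu}, there exists a Bernstein-Markov probability measure $\mu$ on $K$, which may be chosen so that $\mu(K\setminus L)=0$ (the hypothesis $\int_{K\setminus L}(dd^c g_K)^n=0$ says that the natural candidate, the equilibrium measure of $K$ itself, is already concentrated on $\overline{L}$). Setting $\mu_j:=\mu|_{K_j}$, the couples $(K_j,\mu_j)$ should inherit the Bernstein-Markov property with constants controlled uniformly in $j$, since the relevant test polynomials are the same. Bloom-Levenberg then gives, for each $j$,
$$d(K_j)=\lim_{s\to\infty}Z_s(K_j,\mu_j)^{1/(2l_s(n))},\qquad d(K)=\lim_{s\to\infty}Z_s(K,\mu)^{1/(2l_s(n))}.$$
Since $Z_s(K_j,\mu_j)\uparrow Z_s(K,\mu)$ in $j$ for each fixed $s$ by monotone convergence of the integrands, a diagonal extraction $s_j\to\infty$ should yield $Z_{s_j}(K_j,\mu_j)^{1/(2l_{s_j}(n))}\to d(K)$, which forces $\lim_j d(K_j)\geq d(K)$.

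A cleaner alternative route would use the Monge-Amp\`ere energy $\mathcal{E}$ introduced in Section 1 together with the Berman-Boucksom \cite{BB} identity that writes $\log d(K)$ as a universal dimensional multiple of $\mathcal{E}(g_K^*,\log^+|\cdot|)$. The pointwise convergence $g_{K_j}\downarrow g_K$ supplied by Lemma \ref{greendown}, combined with Bedford-Taylor continuity of the Monge-Amp\`ere operator along decreasing sequences in $\mathcal{L}^+(\Cn)$, then yields $\mathcal{E}(g_{K_j}^*)\to \mathcal{E}(g_K^*)$, hence the claim. The principal obstacle in either route is the passage to the limit in $j$: the Bloom-Levenberg argument demands uniform control of Bernstein-Markov constants along the sequence $(K_j,\mu_j)$, while the energy argument requires continuity of $\mathcal{E}$ under precisely the mode of convergence that Lemma \ref{greendown} supplies. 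In both cases, the hypothesis $\int_{K\setminus L}(dd^cg_K)^n=0$ is exactly what prevents a jump of the equilibrium measure, and hence of the transfinite diameter, between $L$ and $K$.
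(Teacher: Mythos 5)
Your ``cleaner alternative route'' is in fact the paper's proof. The paper invokes the unweighted energy version of Rumely's formula, $-\ln d(K)=\frac{1}{n(2\pi)^{n}}\mathcal{E}(g_{K},g_{T})$ with $T$ the unit torus (Theorem 5.1 of \cite{Le2}, or Section 9.1 of \cite{BB}), and then combines the monotone convergence $g_{K_{j}}\downarrow g_{K}$ supplied by Lemma \ref{greendown} with the known convergence of the energy along decreasing sequences in $\mathcal{L}^{+}(\Cn)$ (the remark after Lemma 3.5 in \cite{Le2}) to pass to the limit in $j$. So that branch of your proposal is correct in outline and coincides with the paper's argument; the ``principal obstacle'' you flag there is precisely the cited result, so this route closes. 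Your easy half, $\lim_{j}d(K_{j})\le d(K)$ by monotonicity of the maximal Vandermondians, is fine.

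Your primary (Bloom--Levenberg) route, however, has two genuine gaps. First, the Bernstein--Markov property does not transfer from $(K,\mu)$ to $(K_{j},\mu|_{K_{j}})$: in the required inequality $\lVert p_{s}\rVert_{K_{j}}\le M(1+\varepsilon)^{s}\lVert p_{s}\rVert_{L^{2}(\mu|_{K_{j}})}$ both sides shrink when you pass to the subset --- the sup norm on the left, but also the $L^{2}$ norm on the right --- so nothing gives constants ``controlled uniformly in $j$''; Remark \ref{existmu} only produces some Bernstein--Markov measure on each $K_{j}$ separately, with no relation to $\mu$, and the equilibrium measure of $K$ need not itself be Bernstein--Markov. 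Second, the diagonal extraction does not close. From $Z_{s}(K_{j},\mu_{j})\uparrow Z_{s}(K,\mu)$ as $j\to\infty$ for fixed $s$ you obtain $d(K)=\lim_{s}\sup_{j}Z_{s}(K_{j},\mu_{j})^{1/(2l_{s}(n))}$, but to conclude $\sup_{j}d(K_{j})\ge d(K)$ you must interchange $\lim_{s}$ and $\sup_{j}$ in the unfavourable direction, which requires the convergence $Z_{s}(K_{j},\mu_{j})^{1/(2l_{s}(n))}\to d(K_{j})$ of Theorem \ref{BL25} to be uniform in $j$; no such uniformity is available, and the elementary estimate $Z_{s}(K_{j},\mu_{j})^{1/(2l_{s})}\le d_{s}(K_{j})\,\mu(K_{j})^{m_{s}/(2l_{s})}$ only controls the diagonal terms by $d_{s}(K_{j})$, which for a fixed finite $s$ need not be close to (and in one variable always exceeds) $d(K_{j})$. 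Without that uniformity the argument only recovers the trivial inequality $\sup_{j}d(K_{j})\le d(K)$. In short: discard the first route and keep the energy argument.
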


\begin{proof} We will use the unweighted energy version of Rumely's formula (see e.g., Theorem 5.1 of
\cite{Le2}, or Section 9.1 of \cite{BB}). Since, by Lemma \ref{greendown}, $g_{K_{j}}\downarrow g_{K}$, applying the remark after Lemma 3.5 in \cite{Le2}, one obtains
$$-\ln{d(K_{j})}=\frac{1}{n(2\pi)^{n}}\mathcal{E}(g_{K_{j}}, g_{T})\uparrow
\frac{1}{n(2\pi)^{n}}\mathcal{E}(g_{K}, g_{T})=-\ln{d(K)}, \,\,as\,\,\,j\rightarrow \infty,$$  where $T$
is the unit torus in $\mathbb{C}^{n}.$\end{proof}

\section{Sharpness of Polya's Inequality}
The following Theorem is proved by Goluzin in \cite{G1} (see also \cite{G2}, Section 11).
\begin{theorem}\label{sharp1}
For functions which are analytic in an infinite domain $B$ with boundary K consisting of a finite number of closed Jordan curves and having the expansion \begin{equation}f(z)=\sum_{k=1}^{\infty}{\frac{a_{k}}{z^{k}}},\end{equation} in a neighborhood of $z=\infty$, the inequality $D(f)=\limsup_{s\rightarrow \infty }{\lvert A_{s}}\left(
f\right) {\rvert }^{1/s^{2}}\leq d(K)$ given by Theorem \ref{polya1} is sharp.
\end{theorem}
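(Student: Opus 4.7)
The plan is to exhibit an explicit $f \in A(\overline{\mathbb{C}}\setminus K)$ for which $D(f) = d(K)$, by taking $f$ to be the Cauchy transform of a suitably chosen measure on $K$. Concretely, I would invoke Remark \ref{existmu} to pick a positive Borel measure $\mu$ on $K$ for which $(K,\mu)$ satisfies the Bernstein-Markov inequality, and set
\[
f(z) := \int_K \frac{d\mu(\zeta)}{z-\zeta}.
\]
Since $\mu$ is supported on $K$, this $f$ is analytic on $\overline{\mathbb{C}}\setminus K$ (hence on $B$) and vanishes at $\infty$; expanding $(z-\zeta)^{-1}$ as a geometric series for $\lvert z\rvert$ large gives $f(z) = \sum_{k\geq 0} a_k z^{-k-1}$ with $a_k = \int_K \zeta^k\, d\mu(\zeta)$.

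Next, expand the Hankel determinant $A_s(f) = \det(a_{k+m})_{k,m=0}^{s-1}$ by multilinearity (the classical Andreief-Heine identity) to obtain
\[
A_s(f) = \frac{1}{s!}\int_{K^s} \lvert V(\zeta_1,\ldots,\zeta_s)\rvert^2\, d\mu(\zeta_1)\cdots d\mu(\zeta_s) = \frac{Z_{s-1}(K,\mu)}{s!},
\]
using that in dimension $n=1$ one has $m_{s-1}(1) = s$, so the $s$-variable Vandermonde above coincides exactly with the one appearing in the definition (\ref{zskm}) of $Z_{s-1}(K,\mu)$.

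Finally, apply Theorem \ref{BL25} in the case $n=1$, where $l_{s-1}(1) = s(s-1)/2$, to conclude $Z_{s-1}(K,\mu)^{1/(s(s-1))} \to d(K)$, and hence $Z_{s-1}(K,\mu)^{1/s^2} \to d(K)$ as well (since $(s-1)/s \to 1$). Combined with $(s!)^{1/s^2} \to 1$ by Stirling, this yields $\lim_{s\to\infty}\lvert A_s(f)\rvert^{1/s^2} = d(K)$, i.e.\ $D(f) \geq d(K)$. The reverse inequality is exactly Theorem \ref{polya1}, so $D(f) = d(K)$, which is the desired sharpness.

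The main obstacle largely evaporates under this approach: the single nontrivial ingredient is the existence of a Bernstein-Markov measure on $K$, furnished in full generality by Remark \ref{existmu}, while the Andreief-Heine identity and the exponent bookkeeping are routine. Goluzin's original proof for Jordan-curve $K$ had to construct the relevant measure explicitly (for instance from the equilibrium measure, whose density with respect to arclength is bounded on analytic curves) in order to obtain a Bernstein-Markov-type estimate by hand; it is precisely this geometric step that the Bloom-Levenberg framework renders unnecessary, and the same skeleton will in fact drive the proof of the multivariate sharpness result (Theorem \ref{strongerr}) announced in the introduction.
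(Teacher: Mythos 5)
There is a genuine gap, and it is precisely the point on which the whole paper turns. The Andreief--Heine identity applied to the moments $a_k=\int_K\zeta^k\,d\mu(\zeta)$ gives
\[
A_s(f)=\det\Bigl(\int_K\zeta^{k+m}\,d\mu\Bigr)_{k,m=0}^{s-1}=\frac{1}{s!}\int_{K^s}V(\zeta_1,\ldots,\zeta_s)^{2}\,d\mu(\zeta_1)\cdots d\mu(\zeta_s),
\]
i.e.\ it produces the \emph{square} of the complex-valued Vandermonde, not its squared modulus. The quantity $\frac{1}{s!}\int_{K^s}\lvert V\rvert^{2}\,d\mu^{\otimes s}$ that appears in (\ref{zskm}) is instead the Gram determinant $\det\bigl(\int_K\zeta^{k}\,\overline{\zeta}^{\,m}\,d\mu\bigr)$, a different matrix. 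For the $K$ of Theorem \ref{sharp1} --- a union of Jordan curves in $\mathbb{C}$, not a subset of $\mathbb{R}$ --- $V^{2}$ is genuinely complex, the integral can collapse by cancellation, and the lower bound $D(f)\geq d(K)$ does not follow. A concrete counterexample to your construction: take $K$ the unit circle and $\mu$ normalized arclength, which does satisfy Bernstein--Markov; then $f(z)=1/z$, $a_k=\delta_{k0}$, hence $A_s(f)=0$ for all $s\geq 2$ and $D(f)=0$, while $d(K)=1$.

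This is exactly why the paper's Theorem \ref{strongerr} --- whose proof is your argument, run through (\ref{i!hi}), Lemma \ref{functional} and Theorem \ref{BL25} --- is restricted to \emph{real} compact sets $K\subseteq\mathbb{R}^{n}$, where $[V]^{2}=\lvert V\rvert^{2}\geq 0$ and no cancellation can occur; the paper explicitly flags the nonnegativity of $[V]^{2}$ as the step where reality is used. The paper does not reprove Theorem \ref{sharp1} at all: it cites Goluzin, whose argument for Jordan-curve boundaries is of a different nature and cannot be replaced by the Bernstein--Markov/Vandermonde-integral mechanism alone. So the geometric step you propose to discard is not an artifact of Goluzin's era; it is what handles the complex cancellation your proof overlooks. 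Your exponent bookkeeping ($m_{s-1}(1)=s$, $2l_{s-1}(1)=s(s-1)$, $(s!)^{1/s^{2}}\to 1$) is fine, but the argument as written proves only the real case, which is the paper's Theorem \ref{strongerr}, not the statement at hand.
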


Another way of expressing Theorem \ref{sharp1} is, for a compact set $K\subseteq \mathbb{C}$
\begin{equation}
d\left( K\right) =\sup \left\{ D\left( f\right) :f\in A\left( \overline{%
\mathbb{C}}\smallsetminus K\right) \right\} ,
\end{equation}%
if the boundary $\partial K$ consists of a finite number of closed
Jordan curves.

\begin{definition}\label{newdesc} Let $K$ be a polynomially convex compact set in $\Cn$. $K$ is said to satisfy the \emph{sharpness property} in Polya inequality, shortly denoted as $K\in (SP)$, if $$d(K)=\sup\{D(f'): f'=T(f^{*}), f^{*}\in A(K)^{*}\}.$$
We say that $K$ has a \emph{strong sharpness property} in Polya inequality, denoted by $K\in (SSP)$,  if there exists a $f^{*}\in A(K)^{*}$ such that $$D(f')=d(K)$$ for $f'=T(f^{*})$, where $T$ is defined as in Lemma \ref{isomm}.\end{definition}

If $K$ is a pluripolar compact set in $\Cn$, then $K\in (SSP)$ by the result of Levenberg-Taylor (\cite{LT}) which says that  $d(K)=0$ if and only if $K$ is pluripolar. From now on, we only consider non-pluripolar compact sets.

\begin{proposition}\label{wsp} Let $K$ be a compact set in $\Cn$, $\{K_{i}\}$ a sequence of compact sets with $K=\bigcap_{i=1}^{\infty}{K_{i}}$. Assume $K_{i}\in (SP)$ for all $i\in \mathbb{N}.$ Then there exists a sequence of analytic functionals $\{f^{*}_{i}\}$ such that $f^{*}_{i}\in A(K_{i})^{*}$ for each $i\in \mathbb{N}$ and \begin{equation}\label{nwsp}\lim_{i\rightarrow \infty}{D(f'_{i})}=d(K).\end{equation} \end{proposition}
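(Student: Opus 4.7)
My plan is a short squeeze argument combining the definition of $(SP)$ with the stability of the transfinite diameter under approximations from outside (Proposition \ref{stabilitydown}).

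First, for each $i$, the hypothesis $K_{i}\in (SP)$ together with Definition \ref{newdesc} yields
\[
d(K_{i})=\sup\{D(f'):f'=Tf^{*},\ f^{*}\in A(K_{i})^{*}\}.
\]
By the definition of supremum, I pick $f_{i}^{*}\in A(K_{i})^{*}$ so that the corresponding $f_{i}'=Tf_{i}^{*}$ satisfies $D(f_{i}')>d(K_{i})-1/i$. On the other hand, the multivariate Polya inequality (Theorem \ref{polya2}) applied to the compact set $K_{i}$ provides the matching upper bound $D(f_{i}')\le d(K_{i})$, since $f_{i}^{*}$ tautologically has a continuous extension onto $A(K_{i})$. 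Combining the two,
\[
d(K_{i})-\tfrac{1}{i}<D(f_{i}')\le d(K_{i}).
\]

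Second, I invoke Proposition \ref{stabilitydown}, reading $\{K_{i}\}$ as a nested decreasing sequence (the natural meaning of $K=\bigcap K_{i}$ in the context of approximation from outside). This gives $d(K_{i})\to d(K)$ as $i\to\infty$. Letting $i\to\infty$ in the two-sided bound above, I conclude $\lim_{i\to\infty}D(f_{i}')=d(K)$, which is exactly (\ref{nwsp}).

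The only point requiring care is the monotonicity assumption on $\{K_{i}\}$: Proposition \ref{stabilitydown} is formulated for nested decreasing sequences. If the statement is meant to allow a general sequence with $K=\bigcap K_{i}$, one would try to pass to $\widetilde{K}_{i}:=\bigcap_{j\le i}K_{j}$, but then one must verify that the property $(SP)$ is inherited by such intersections, which is not automatic. Modulo this subtlety, the argument is essentially two lines and presents no other real obstacle.
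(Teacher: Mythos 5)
Your argument is exactly the paper's proof: select near-optimal functionals via the $(SP)$ supremum, bound them above by Theorem \ref{polya2}, and squeeze using Proposition \ref{stabilitydown}. Your remark about the decreasing-nestedness of $\{K_{i}\}$ is a fair observation (the paper's statement omits the hypothesis $K_{i+1}\subseteq K_{i}$ but tacitly uses it when invoking Proposition \ref{stabilitydown}), and does not affect the correctness of the approach.
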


\begin{proof} By Definition \ref{newdesc}, for each $i\in \mathbb{N}$, there exists $f^{*}_{i}\in A(K_{i})^{*}$ with $f'_{i}=T(f_{i}^{*})$ such that $d(K_{i})\leq D(f'_{i})+\frac{1}{i}$. Theorem \ref{polya2} gives $D(f'_{i})\leq d(K_{i})$. By using Proposition \ref{stabilitydown}, we have $$d(K)=\lim_{i\rightarrow \infty}{d(K_{i})}\leq \lim_{i\rightarrow \infty}{D(f'_{i})}\leq \lim_{i\rightarrow \infty}{d(K_{i})}=d(K),$$ which gives the limit (\ref{nwsp}). \end{proof}

As seen from Proposition \ref{wsp}, $(SP)$ is not preserved under the approximation from outside, however, for an approximation from inside, we have the stability of the property $(SP):$

\begin{proposition}\label{sharpstablein} Let the conditions of Lemma \ref{greendown} be given. Suppose further that $K_{i}\in (SP)$ for all $i\in \mathbb{N}.$ Then $K\in (SP).$\end{proposition}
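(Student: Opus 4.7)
\textbf{Proof plan for Proposition \ref{sharpstablein}.} The inequality $\sup\{D(f'): f^{*}\in A(K)^{*}\}\leq d(K)$ is immediate from Theorem \ref{polya2}, so the task is to construct, for each $\varepsilon>0$, an analytic functional $f^{*}\in A(K)^{*}$ with $D(f')\geq d(K)-\varepsilon$.

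The first step is to embed $A(K_{i})^{*}$ into $A(K)^{*}$. Since $K_{i}\subseteq K$, every open neighborhood of $K$ is a neighborhood of $K_{i}$, so the restriction map on germs $r_{i}:A(K)\rightarrow A(K_{i})$ is well-defined and continuous between the inductive limit topologies. Composition with $r_{i}$ then yields a canonical linear map $A(K_{i})^{*}\hookrightarrow A(K)^{*}$, $f_{i}^{*}\mapsto f_{i}^{*}\circ r_{i}$. The crucial observation is that $D$ is invariant under this embedding: the Hankel-like determinants $H_{j}(f_{i}^{*})$ are built from the numbers $f_{i}^{*}(e_{\alpha})$ for monomials $e_{\alpha}(z)=z^{k(\alpha)}$, which are entire, hence belong to both $A(K)$ and $A(K_{i})$ and are fixed by $r_{i}$. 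Equivalently, under the Laplace isomorphism $T$ of Lemma \ref{isomm}, $f_{i}^{*}$ and $f_{i}^{*}\circ r_{i}$ produce the same germ $f'_{i}$ at $\infty^{n}$.

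Next I would use the hypothesis $K_{i}\in (SP)$: for each $i\in\mathbb{N}$ it provides $f_{i}^{*}\in A(K_{i})^{*}$ with $D(f'_{i})\geq d(K_{i})-1/i$. Viewing each $f_{i}^{*}$ as an element of $A(K)^{*}$ via the embedding above, and invoking Theorem \ref{stabilityup} (whose hypotheses coincide with those of Lemma \ref{greendown}) to obtain $d(K_{i})\uparrow d(K)$, I would conclude
\begin{equation*}
\sup\{D(f'):f^{*}\in A(K)^{*}\}\geq \limsup_{i\rightarrow\infty} D(f'_{i})\geq \lim_{i\rightarrow\infty}\left(d(K_{i})-\tfrac{1}{i}\right)=d(K).
\end{equation*}
Combined with Theorem \ref{polya2}, this yields $d(K)=\sup\{D(f'):f^{*}\in A(K)^{*}\}$, i.e.\ $K\in(SP)$.

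I do not foresee a substantive obstacle. The only point that requires a moment of care is the compatibility between duality and restriction of germs used in Step 1, after which the statement reduces to interior-stability of the transfinite diameter, already supplied by Theorem \ref{stabilityup}. Polynomial convexity of the $K_{i}$ (implicit in $K_{i}\in(SP)$) is assumed as a standing hypothesis throughout this section.
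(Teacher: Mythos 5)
Your proposal is correct and follows essentially the same route as the paper: extract near-optimal functionals $f_{i}^{*}\in A(K_{i})^{*}$ from the hypothesis $K_{i}\in (SP)$, use Theorem \ref{stabilityup} to get $d(K_{i})\to d(K)$, and combine with the upper bound from Theorem \ref{polya2}. The paper leaves the embedding $A(K_{i})^{*}\hookrightarrow A(K)^{*}$ and the invariance of $D$ under it implicit, so your explicit treatment of that point is a welcome refinement rather than a departure.
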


\begin{proof} Proof is almost the same as the proof of Proposition \ref{wsp} except we only use Theorem \ref{stabilityup} instead of Proposition \ref{stabilitydown} in the end, hence we have the following: $$d(K)\leq \lim_{i\rightarrow \infty}{D(f'_{i})}=\sup\{D(f'_{i}): i\in \mathbb{N}\}\leq d(K),$$ which concludes that $d(K)=\sup\{D(f'_{i}): i\in \mathbb{N}\}$ and so $K\in (SP)$ by Definition \ref{nwsp}.  \end{proof}

For an arbitrary compact set in $\mathbb{C}$, a following sharpness statement, which is weaker than $(SP)$,
is derived easily from Goluzin's result above.

\begin{proposition}\label{weaker1}
Let $K$ be a compact set in $\mathbb{C}$, $\{K_{i}\}$ a sequence of compact sets with the properties $K_{i+1}\Subset K_{i}$  for all $i\in \mathbb{N}$, $K=\bigcap_{i=1}^{\infty}{K_{i}}$. Then there exists a sequence of  functions $f_{i}\in A(\overline{\mathbb{C}}\backslash K_{i})$  such that
\begin{equation}\label{gollimit}\lim_{i\rightarrow \infty}{D(f_{i})}=d(K).\end{equation}
\end{proposition}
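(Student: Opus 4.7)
The plan is to reduce to Goluzin's theorem (Theorem \ref{sharp1}) by interposing, between each $K_{i+1}$ and $K_{i}$, a compact set $L_{i}$ whose boundary is a finite disjoint union of closed Jordan curves. Since $K_{i+1}\Subset K_{i}$, I would cover $K_{i+1}$ by finitely many open disks whose closures lie in $\operatorname{int}(K_{i})$ and set $L_{i}$ to be their closed union. A small generic perturbation of the centres and radii makes distinct boundary circles either disjoint or transverse at exactly two points, so that $\partial L_{i}$ is a disjoint union of finitely many Jordan curves. By construction, $K\subseteq K_{i+1}\subseteq L_{i}\subseteq K_{i}$.

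Next I would invoke Goluzin's sharpness result for $L_{i}$. Since $\partial L_{i}$ is a finite union of closed Jordan curves, Theorem \ref{sharp1} yields
\[
d(L_{i})=\sup\{D(f): f\in A(\overline{\mathbb{C}}\setminus L_{i})\},
\]
so I can pick $f_{i}\in A(\overline{\mathbb{C}}\setminus L_{i})$ with $D(f_{i})\geq d(L_{i})-1/i$. Because $L_{i}\subseteq K_{i}$ we have $\overline{\mathbb{C}}\setminus K_{i}\subseteq \overline{\mathbb{C}}\setminus L_{i}$, and therefore the restriction of $f_{i}$ is an element of $A(\overline{\mathbb{C}}\setminus K_{i})$, as required by the statement. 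Note that $D(f_{i})$ depends only on the Taylor expansion of $f_{i}$ at $\infty$, so it is unchanged by this restriction.

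To close, I would sandwich $D(f_{i})$. Polya's inequality (Theorem \ref{polya1}) applied on $K_{i}$ gives $D(f_{i})\leq d(K_{i})$; monotonicity of the transfinite diameter on the chain $K\subseteq L_{i}\subseteq K_{i}$ gives $d(K)\leq d(L_{i})\leq d(K_{i})$; and Proposition \ref{stabilitydown} supplies $d(K_{i})\to d(K)$, hence also $d(L_{i})\to d(K)$. Thus
\[
d(K)-\tfrac{1}{i}\;\leq\; d(L_{i})-\tfrac{1}{i}\;\leq\; D(f_{i})\;\leq\; d(K_{i}),
\]
and letting $i\to\infty$ yields the limit (\ref{gollimit}).

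The only delicate step is the construction of $L_{i}$ with a Jordan-curve boundary inside $K_{i}$; the union-of-disks argument above is standard but needs the generic-perturbation observation to exclude tangencies and triple intersections. An equally acceptable alternative is to take a slightly shrunken $\varepsilon$-neighbourhood of $K_{i+1}$ and smooth its boundary by applying Sard's theorem to a convolved indicator function; in the planar setting either route is routine, and no further analytic input beyond Theorem \ref{sharp1} and Proposition \ref{stabilitydown} is required.
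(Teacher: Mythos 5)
Your proposal is correct and follows essentially the same route as the paper: interpose a compact $L_{i}$ with Jordan-curve boundary between $K_{i+1}$ and $K_{i}$, apply Goluzin's theorem (Theorem \ref{sharp1}) on $L_{i}$, bound $D(f_{i})\leq d(K_{i})$ via Theorem \ref{polya1}, and squeeze using Proposition \ref{stabilitydown}. The paper simply asserts the existence of $L_{i}$ (with analytic boundary) without the union-of-disks construction you supply, but the argument is otherwise identical.
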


\begin{proof} For each $i\in \mathbb{N}$, we can find a compact set $L_{i}$ whose boundary consists of a finite number of closed analytic Jordan curves so that $K_{i+1}\Subset L_{i}\Subset K_{i}$ holds. By the result of Goluzin, there exists  $f_{i}\in A(\overline{\mathbb{C}}\setminus L_{i})$ such that, $d(L_{i})< D(f_{i})+\frac{1}{i}$. Since $f_{i}\in A(\overline{\mathbb{C}}\setminus K_{i})$ holds, we get by Theorem \ref{polya1}, $D(f_{i})\leq d(K_{i})$. Hence by using Proposition \ref{stabilitydown} we obtain the following $$d(K)=\lim_{i\rightarrow \infty}{d(L_{i})}\leq \lim_{i\rightarrow \infty}{D(f_{i})}\leq \lim_{i\rightarrow \infty}{d(K_{i})}=d(K),$$ which gives the desired limit (\ref{gollimit}).\end{proof}

Let $K$ be a compact set in $\Cn$, and $J:A(K)\rightarrow C(K)$ the natural restriction homomorphism.
$AC(K)$ is the Banach space obtained as the completion of the set $J(A(K))$ in the space $C(K)$ with
respect to the uniform norm.

\begin{lemma}\label{functional}
Let $K$  be an infinite polynomially convex compact set in $\mathbb{C}^{n}$.  Then, for each
bounded Borel measure $\mu\in \mathcal{M}(K)$,  there exists an analytic functional $f^{*}\in
A(K)^{*}\hookrightarrow A(\mathbb{C}^n)^{*}$ and a corresponding analytic germ $f'=Tf^*$  such that
\begin{equation}\label{funcmu}f^{*}(f)=<f,
f'>=\int_{K}{f(\zeta)}d\mu(\zeta), \end{equation} for every $f\in A(\Cn)$.
\end{lemma}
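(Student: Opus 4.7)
The plan is to define $f^{*}$ directly as integration against $\mu$, check continuity on the inductive limit $A(K)$, pass to $A(\mathbb{C}^{n})^{*}$, and then obtain $f'$ via Lemma \ref{isomm}. Concretely, for any germ $\varphi\in A(K)$ with a holomorphic representative on a neighborhood $V\supset K$, the restriction $\varphi|_{K}$ is continuous on the compact set $K$, so the expression
\[ f^{*}(\varphi) := \int_{K}\varphi(\zeta)\,d\mu(\zeta) \]
is well-defined and depends only on the germ of $\varphi$. Continuity of $f^{*}$ on $A(K)=\lind_{j\to\infty} A(D_{j})$ reduces to continuity at each step: since $K\subset D_{j}$, the estimate
\[ |f^{*}(\varphi)|\le \lVert\mu\rVert\,\sup_{K}|\varphi| \]
shows that $f^{*}|_{A(D_{j})}$ is dominated by a seminorm of the locally-uniform topology on $A(D_{j})$. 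Hence $f^{*}\in A(K)^{*}$.

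Since $K$ is polynomially convex, the Oka-Weil theorem makes the restriction map $A(\mathbb{C}^{n})\to A(K)$ continuous with dense image, so its dual yields the embedding $A(K)^{*}\hookrightarrow A(\mathbb{C}^{n})^{*}$ already invoked earlier in the paper. Viewing $f^{*}$ as an analytic functional and applying Lemma \ref{isomm} produces a unique $f'=Tf^{*}\in A_{0}(\{\infty^{n}\})$ satisfying $\langle\varphi,f'\rangle = f^{*}(\varphi)$ for every $\varphi\in A(\mathbb{C}^{n})$; combined with the definition of $f^{*}$ this yields (\ref{funcmu}).

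I would not expect a real obstacle here: the lemma is essentially a dictionary entry that recasts an integration functional in the language of Lemma \ref{isomm}. The only items requiring care are the trivial continuity estimate above and the standard fact that polynomial convexity implies the embedding $A(K)^{*}\hookrightarrow A(\mathbb{C}^{n})^{*}$. The hypothesis that $K$ be infinite plays no visible role in this step, but it presumably matters later, e.g.\ to ensure that the moment sequence $a_{k(\alpha)}=\int_{K}\zeta^{k(\alpha)}d\mu(\zeta)$ yields nondegenerate Hankel-like determinants $H_{i}(f^{*})$ when this lemma is combined with Theorem \ref{BL25} to establish the strong sharpness property.
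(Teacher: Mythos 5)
Your proof is correct and follows essentially the same route as the paper's: both realize $f^{*}$ as integration against $\mu$, deduce its continuity on $A(K)$ from the uniform-norm bound $|f^{*}(\varphi)|\le \lVert\mu\rVert\,\sup_{K}|\varphi|$, and then invoke Lemma \ref{isomm} together with the embedding $A(K)^{*}\hookrightarrow A(\Cn)^{*}$. The only cosmetic difference is that the paper packages the continuity estimate by factoring through the Banach space $AC(K)$ (so that $\mu$ first defines a functional in $AC(K)^{*}\hookrightarrow A(K)^{*}$ which is then restricted to $A(K)$), whereas you verify continuity directly on each step $A(D_{j})$ of the inductive limit; both arguments rest on the same estimate.
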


\begin{proof}
The dense embedding \,$A(K)\hookrightarrow AC(K)$ \,implies, for the dual spaces, the following
embedding:
$AC(K)^{*}\hookrightarrow A(K)^{*}$. Since $AC(K)$  is a closed subspace of  $C(K)$,  every bounded
Borel
measure  $\mu\in \mathcal{M}(K)$  defines a linear continuous functional  $F^{*}\in AC(K)^{*}$  such
that $$F^{*}(f)=\int_{K}{f(\zeta)d\mu(\zeta)}$$ for every $f\in AC(K)$. Then, the restriction
$f^{*}=F^{*}|_{A(K)}$  belongs to  $A(K)^{*}$. By Lemma \ref{isomm}, since  $A(K)^{*}\hookrightarrow A(\Cn)^{*}$,  there is $f'\in A_{0}(\{\infty^{n}\})$  such that $$f^{*}(f)=<f, f'>=\left(
\frac{1}{2\pi i}\right) ^{n}\int_{\mathbb{T}_{R}^{n}}f\left( \zeta \right)
f^{\prime }\left( \zeta \right) \ d\zeta ,\ f\in A\left( \mathbb{C}%
^{n}\right),$$ where $\mathbb{T}_{R}^{n}$ \, is defined as in (\ref{torusn}), and $R$ is sufficiently large.
\end{proof}

Now we show that, for any real compact set in $\Cn$, the equality in the estimate (\ref{polyainn}) is
attained at some $f^{*}\in A(K)^{*}$.
\begin{theorem}\label{strongerr}
Let $K\subseteq \Rn\subseteq \Cn$\, be a compact set. Then $K\in (SSP)$.
\end{theorem}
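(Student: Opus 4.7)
The plan is to produce the witnessing analytic functional $f^{*}$ by integrating against a Bernstein--Markov measure on $K$ and then to exploit the fact that $K$ is real to identify the Hankel-like determinants of (\ref{hank}) with the Bloom--Levenberg quantities $Z_{s}(K,\mu)$ of (\ref{zskm}).

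First I would invoke Remark \ref{existmu} to fix a bounded positive Borel measure $\mu\in\mathcal{M}(K)$ for which $(K,\mu)$ satisfies the Bernstein--Markov property, and then apply Lemma \ref{functional} to manufacture an analytic functional $f^{*}\in A(K)^{*}$ acting by $f^{*}(g)=\int_{K}g\,d\mu$ on $A(\mathbb{C}^{n})$; set $f'=T(f^{*})$.

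Next I would substitute $f^{*}$ into the identity (\ref{i!hi}) along the subsequence $i=m_{s}$. Because each iterated evaluation of $f^{*}$ is just integration against $\mu$ on $K$, Fubini yields
\[
m_{s}!\,|H_{m_{s}}(f^{*})|=\left|\int_{K^{m_{s}}}\bigl[V(\zeta^{(1)},\ldots,\zeta^{(m_{s})})\bigr]^{2}\,d\mu(\zeta^{(1)})\cdots d\mu(\zeta^{(m_{s})})\right|.
\]
The pivotal observation is that, since the monomials $e_{\alpha}(z)=z^{k(\alpha)}$ have real coefficients, $V(\zeta^{(1)},\ldots,\zeta^{(m_{s})})$ is real-valued whenever all $\zeta^{(\beta)}\in K\subseteq\mathbb{R}^{n}$. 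Hence $[V]^{2}=|V|^{2}\geq 0$ pointwise on $K^{m_{s}}$, the outer absolute value is redundant, and the right-hand side coincides with $Z_{s}(K,\mu)$.

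Finally, Theorem \ref{BL25} gives $Z_{s}(K,\mu)^{1/(2l_{s})}\to d(K)$, and since $\log(m_{s}!)=O(m_{s}\log m_{s})$ while $l_{s}$ grows like $s\cdot m_{s}$, the normalizing factor $(m_{s}!)^{-1/(2l_{s})}$ tends to $1$. Therefore $|H_{m_{s}}(f^{*})|^{1/(2l_{s})}\to d(K)$ along $i=m_{s}$, which combined with the Polya bound $D(f')\leq d(K)$ of Theorem \ref{polya2} forces $D(f')=d(K)$, i.e., $K\in(SSP)$. The only delicate step is the reality reduction $[V]^{2}=|V|^{2}$: this is precisely where the hypothesis $K\subseteq\mathbb{R}^{n}$ is used, since without it the functional furnished by Lemma \ref{functional} would not deliver a nonnegative integrand and the bridge from (\ref{i!hi}) to (\ref{zskm}) would collapse. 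Every other ingredient is already packaged in the cited results.
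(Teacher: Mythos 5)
Your proposal is correct and follows essentially the same route as the paper: fix a Bernstein--Markov measure via Remark \ref{existmu}, realize it as an analytic functional via Lemma \ref{functional}, use the identity (\ref{i!hi}) together with the reality of $V$ on $K^{m_s}$ to identify $m_s!\,|H_{m_s}(f^*)|$ with $Z_s(K,\mu)$, and conclude by Theorem \ref{BL25}. Your explicit remark that the subsequential limit along $i=m_s$ combines with the upper bound of Theorem \ref{polya2} to pin down the full $\limsup$ is a small extra care step that the paper leaves implicit, but the argument is the same.
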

\begin{proof}
By Theorem \ref{BL25} and Remark \ref{existmu}, there exists a measure $\mu\in \mathcal{M}(K)$ such that   $(K,\mu)$  satisfies the Bernstein-Markov
inequality. Let $f^{*}$ be an analytic functional corresponding to $\mu$ by Lemma \ref{functional}. Initially, we show that \,$Z_{s}(K, \mu)=m_{s}(n)!\,{\lvert\
H_{m_{s}(n)}{(f^{*})} \rvert}$, \,where \, $Z_{s}(K, \mu)$\, and\, $%
H_{m_{s}(n)}{(f^{*})}$ are defined in Section 2. Indeed, considering the relation (\ref{i!hi}) gives :
\begin{equation}\label{inthank}
m_{s}(n)!\,{\lvert H\,_{m_{s}(n)}{(f^{* })}\rvert} = {\lvert\
f^{*}_{\zeta^{(m_{s}(n))}}(\ldots(f^{*}_{\zeta^{(1)}}({
[V(\zeta^{(1)},\ldots, \zeta^{(m_{s}(n))}})]^{2})\ldots)
\rvert},
\end{equation}
Since $K$ is a real subset and so ${[V(\zeta^{(1)},\zeta^{(2)},\ldots, \zeta^{(m_{s}(n))}})]^{2}$ is nonnegative, by iterating (\ref{funcmu}) \,$m_{s}(n)$\, times, the righthand side of (\ref{inthank}) becomes :
\begin{equation*}
\int_{K}\ldots \int_{K}{\lvert\ V(\zeta^{(1)},\zeta^{(2)},\ldots,
\zeta^{(m_{s}(n))}) \rvert}^{2} d\mu(\zeta^{(1)})\ldots
d\mu(\zeta^{(m_{s}(n)}),
\end{equation*}
which is equal to $Z_{s}(K, \mu)$. Since \,$(m_{s}(n)!)^{\frac{1}{2l_{s}(n)}%
}\rightarrow 1$\,\, as \,$s\rightarrow \infty$,\, we have, by Theorem \ref{BL25},
\begin{equation*}
d(K)=\lim_{s\rightarrow \infty}{Z_{s}(K, \mu)}^{\frac{1}{2l_{s}(n)}%
}=\lim_{s\rightarrow \infty}{{\lvert\ H\,_{m_{s}(n)}{(f^{*})} \rvert}}%
^{\frac{1}{2l_{s}(n)}}=D(f^{'}).
\end{equation*}
\end{proof}

\emph{$\mathbf{Problem.}$} Characterize compact sets in $\Cn$ such that either $(SP)$ or $(SSP)$ holds.

\subsection*{Acknowledgements}

We are grateful to the referee for attracting our attention to the Bloom-Levenberg article \cite{BL2} and for other valuable remarks.

%
%
%
%

\end{document}